
\documentclass[letterpaper, 10 pt, conference]{ieeeconf}  
\usepackage{amsmath}
\usepackage{amssymb}
\IEEEoverridecommandlockouts                              
\overrideIEEEmargins

\usepackage{graphics} 
\usepackage{epsfig} 
\usepackage{epstopdf}
\usepackage{mathptmx} 
\usepackage{times} 
\usepackage{amsmath} 
\usepackage{amssymb}  
\usepackage[linesnumbered,lined,boxed,commentsnumbered]{algorithm2e}
\usepackage[noend]{algpseudocode}
\usepackage{subfigure}

\newtheorem{defn}{Definition}

\newtheorem{prop}{Proposition}
\newtheorem{lem}{Lemma}

\newtheorem{cor}{Corollary}

\newcommand{\mbb}[1]{\mathbb{#1}}
\newcommand{\mbf}[1]{\mathbf{#1}}

%
%
%
%
%
%

\newcommand{\R}{\mathbb{R}}

\newcommand{\bmat}[1]{\begin{bmatrix}#1\end{bmatrix}}

\title{\LARGE \bf
Solving Dynamic Programming with Supremum Terms in the Objective and Application to Optimal Battery Scheduling for Electricity Consumers Subject to Demand Charges
}

\author{Morgan Jones, Matthew M. Peet
}

\begin{document}

\maketitle
\thispagestyle{empty}
\pagestyle{empty}

\begin{abstract}
In this paper, we consider the problem of dynamic programming when supremum terms appear in the objective function. Such terms can represent overhead costs associated with the underlying state variables. Specifically, this form of optimization problem can be used to represent optimal scheduling of batteries such as the Tesla Powerwall for electrical consumers subject to demand charges - a charge based on the maximum rate of electricity consumption. These demand charges reflect the cost to the utility of building and maintaining generating capacity. Unfortunately, we show that dynamic programming problems with supremum terms do not satisfy the principle of optimality. However, we also show that the supremum is a special case of the class of forward separable objective functions. To solve the dynamic programming problem, we propose a general class of optimization problems with forward separable objectives. We then show that for any problem in this class, there exists an augmented-state dynamic programming problem which satisfies the principle of optimality and the solutions to which yield solutions to the original forward separable problem. We further generalize this approach to stochastic dynamic programming problems and apply the results to the problem of optimal battery scheduling with demand charges using a data-based stochastic model for electricity usage and solar generation by the consumer.

%
\end{abstract}

\section{Introduction}

In 2012, 95,000 new distributed solar PhotoVoltaic (PV) systems were installed nationally, a 36\% increase from 2011 and yielding a total of approximately 300,000 installations total~\cite{sepa_release}. Further, utility-scale PV generating capacity has increased at an even faster rate, with 2012 installations more than doubling that of 2011~\cite{irec_solar_2013}. Meanwhile, partially due to the development of energy-efficient appliances and new materials for insulation, US electricity demand has plateaued~\cite{annual_outlook2014}. As a consequence of these trends, utility companies are faced with the problem that demand \textit{peaks} continue to grow. Specifically, as per the US EIA~\cite{EIA_report}, the ratio of peak demand to average demand has increased dramatically over the last 20 years.

Fundamentally, the problem faced by utilities is that consumers are typically charged based on total electricity consumption, while utility costs are based both on consumption and for building and maintaining the generating capacity necessary to meet peak demand. Recently, several public and private utilities have moved to address this imbalance by charging residential consumers based on the maximum \textit{rate} (\$ per kW) of consumption - a cost referred to as a \textit{demand charge}. Specifically, in Arizona, both major utilities SRP and APS have mandatory demand charges for residential consumers~\cite{Price}.

For consumers, load is relatively inflexible and hence the most direct approach to minimizing the effect of demand charges is the use of battery storage devices such as the Tesla Powerwall~\cite{farhangi_2010,mohd_2008,dunn_2011}. These devices allow consumers to shift electricity consumption away from periods of peak demand, thereby minimizing the effect of demand charges. In this paper, we specifically focus on battery storage coupled with HVAC and solar generation. This is due to the fact that load from HVAC and electricity from solar generation can be forecast well apriori.

The use of battery storage has been well documented in the literature \cite{Battery} and in particular, there have been several results on the optimal use of batteries for residential customers~\cite{Many_Names,Japan,example,shaving}. Within this literature, there are relatively few results which include demand charges. Of those which do treat demand charges, we mention~\cite{Malky} which proposes a heuristic form of dynamic programming, and the recent work in~\cite{Braun}, wherein the optimization problem is broken down into several agents, and a Lagrangian approach is used to preform the optimization. Furthermore, in \cite{Vijay} a similar energy storage problem is solved using optimized curtailment and load shedding. An $L_p$ approximation of the demand charge was used in combination with multi-objective optimization in~\cite{multi-objective} and, in addition, the optimal use of building mass for energy storage was considered in~\cite{Thermostat}, wherein a bisection on the demand charges was used. However, we note that none of these approaches resolve the fundamental mathematical problem of dynamic programming with a non-separable cost function and hence are either inaccurate,  computationally expensive, or are not guaranteed to converge. Finally, we note that there has been no work to date on optimization of demand charges coupled with stochastic models of solar generation.

In this paper, we formulate the battery storage problem as a dynamic program with an objective function consisting of both integrated time-of-use charges and a supremum term representing the demand charge. Furthermore, we model solar generation as a Gauss-Markov process and minimize the expected value of the objective.
The fundamental mathematical challenge with dynamic programming problems of this form is that, as shown in Section~\ref{sec:GDP}, problems which include supremum terms in the objective do not satisfy the \textit{principle of optimality} and thus recursive solution of the Bellman equation does not yield an optimal policy.

Dynamic programming for problems which do not satisfy the principle of optimality has received little attention and there are few results in the literature in which this problem has been addressed. The only generalized approach to the problem seems to be that taken in~\cite{Duan} which considered the use of multi-objective optimization in the case where the objective function is ``backward separable''. Although the supremum term is not backward separable, an $L_p$ approximation of the supremum \textit{is} backward separable and this approach was applied in~\cite{multi-objective} to the problem of battery storage. Although not directly addressed in~\cite{Duan}, our approach is inspired by this result and is based on the observation that while the supremum is not backward separable, it \textit{is} ``forward separable''.

To solve forward separable optimization problems, we propose in this paper a rigorous approach to a class of generalized dynamic programming problems which are formally defined in Section~\ref{sec:GDP}. For this class of problems, we propose a precise definition of the \textit{principle of optimality} and show that if this definition holds, then the Bellman equation can be used to define an optimal policy. Next, we propose a class of forward separable optimization problems and show that dynamic programming with integral and supremum terms is an element of this class. We then show that the principle of optimality fails for certain problems in this class. In Section~\ref{sec:ADP}, we show that for any forward separable dynamic programming problem, there exists a separable augmented-state dynamic programming problem for which the principle of optimality holds and from which solutions to the original forward separable problem can be recovered. In Section~\ref{sec:battery1}, we apply these methods to the battery scheduling problem for a given load and solar generation schedule. In Section~\ref{sec:SDP}, we show that the augmented dynamic programming problem can also be used to solve stochastic dynamic programming problems with forward separable objectives and apply this approach to the battery scheduling problem using a Gauss-Markov model of solar generation extracted from data provided by local utility SRP.

  \section{Background: Generalized Dynamic Programming}\label{sec:GDP}
In this paper, we consider a generalized class of dynamic programming problems. Specifically, we define a generalized dynamic programming problem as an indexed sequence of optimization problems $G(t_0,x_0)$, defined by a an indexed sequence of objective functions $J_{t_0,x_0}: \R^{m \times (T-t_0)} \times \R^{n \times (T-t_0+1)}$ where we say that $\mathbf{u}^*\in \R^{m \times (T-t_0)}$ and $\mathbf{x}^*\in \R^{n \times (T-t_0+1)}$ solve $G(t_0,x_0)$ if
 \begin{align}
 &(\mathbf{u}^*,\mathbf{x}^*)= \arg \min_{\mathbf u, \mathbf x} J_{t_0,x_0}(\mathbf u, \mathbf x) \label{eqn:opt}\\ \nonumber
 &\text{subject to:  }  x(t+1)=f[x(t),u(t),t] \text{,  given  } x(t_0)=x_0 \\ \nonumber
    & x(t) \in X \subset \mathbb{R}^n \text{ for  } t=t_{0}+1,..,T \\ \nonumber
    & u(t) \in U \subset \mathbb{R}^m\text{ for  } t=t_0,..,T-1  \nonumber
 \end{align}
 Where $f$ : $\mathbb{R}^n \times \mathbb{R}^m \times \mathbb{N} \to \mathbb{R}^n$, $x(t) \in \mathbb{R}^n$ and $u(t) \in \mathbb{R}^m$ for all $t$.
 We denote $J_{t_0,x_0}^* =J_{t_0,x_0}(\mathbf u^*, \mathbf x^*)$.

\begin{defn}
     	We say the sequence of controls  $\bold{u}=(u({t_0}),....,u({T-1})) \in \R^{m \times (T-t_0)}$ is feasible if $u(t) \in U \text{ for  } t=t_0,..,T-1$ and if $x(t+1)=f[x(t),u(t),t]$ and $x(t_0)=x_0$, then $x(t)\in X$ for all $t$. For a given $x$, we denote by $\Gamma_{t,x}$, the set $u \in U$ such that $f[x,u,t] \in X$. In this paper we only consider problems where $\Gamma_{t,x}$ is nonempty for all $x$ and $t$.
\end{defn}
Note that for this class of optimization problems, feasibility is inherited. That is, if $\bold{u}=(u({t}),....,u({T-1}))$ and $\mathbf{x} =(x(t),\cdots,x(T))$ are feasible for $G(t,x(t))$ and  $\bold{v}=(v({s}),....,v({T-1}))$ and $\mathbf{h} =(h(s),\cdots,h(T))$ are feasible for $G(s,x(s))$ where $s>t$, then $\bold{w}=(u(t),\cdots,u(s-1),v({s}),....,v({T-1}))$ and $\mathbf{z} =(x(t),\cdots,x(s-1),h(s),\cdots,h(T))$ are feasible for $G(t,x(t))$.

In certain cases, indexed optimization problems of the Form of $G(t_0,x_0)$ can be solved using an optimal policy.

\begin{defn}
       	A \textit{policy} is any map from the present state and time to a feasible input $(x,t) \mapsto u(t) \in \Gamma_{x,t}$, as $u(t)=\pi(x,t)$. We say that $\pi^*$ is an \textit{optimal policy} for Problem~\eqref{eqn:opt} if
       \[
\mathbf{u}^* =(\pi^*(x_0,t_0),....,\pi^*(x(T-1),T-1) )
       \]
where $x({t+1})^*=f[x(t)^*,\pi^*(x(t)^*,t),t]$ for all $t$.
\end{defn}

The existence of an optimal policy states that knowledge of the current state is sufficient to determine the current input. Existence of such a policy vastly simplifies the optimization problem. However, not every generalized dynamic programming problem admits an optimal policy. The ``Principle of Optimality'' defines one class of optimization problems for which there exists an optimal policy.

\begin{defn}
We say an optimization problem, $G(t_0,x_0)$, of the Form~(1) \textit{satisfies the principle of optimality} if the following holds. For any $s$ and $t$ with $t_0 \le t<s<T$, if $\mathbf u^*=(u(t),...,u(T-1))$ and $ \mathbf x^*=(x(t),...,x(T))$ solve $G(t,x(t))$ then  $\mathbf v=(u(s),...,u(T-1))$ and $ \mathbf h=(x(s),...,x(T))$ solve $G(s,x(s))$.
\end{defn}

The classical form of Dynamic programming algorithm, as originally defined in~\cite{Bellman}, can be used to solve indexed optimization problems of the Form~\eqref{eqn:opt}. This algorithm has the advantage of computational complexity which is linear in $T$.

Dynamic Programming algorithms are most commonly used to solve the special class of indexed optimization problems $P(t_0,x_0)$ of the form
 \begin{align}
  &\min_{\mathbf u, \mathbf x}  J_{t_0,x_0}(\mathbf u, \mathbf x) = \sum_{t=t_0}^{T-1}{c_{t}(x(t),u(t))} + c_{T}({x(T)}) \label{eqn:DP}\\ \nonumber
  &\text{subject to:  }  x(t+1)=f[x(t),u(t),t] \text{,  given  } x(t_0)=x_0  \\ \nonumber
      & x(t) \in X \text{ for  } t=t_{0}+1,..,T \\ \nonumber
      & u(t) \in U \text{ for  } t=t_0,..,T-1  \nonumber
 \end{align}
 Note that $J_{T,x}=c_{T}({x})$. We will refer to $x(t_0) \in \mathbb{R}^n$ the initial state, $x(t) \in \mathbb{R}^n$ the state at time t and $u(t) \in \mathbb{R}^m$ the inputs at time t. $J_{t_0,x_0}$ is the objective function, $c_t$ : $\mathbb{R}^n \times \mathbb{R}^m \to \mathbb{R}$ for $t=t_0,..,T-1$, $c_T$ $\mathbb{R}^n \to \mathbb{R}$ are given functions and $f$ : $\mathbb{R}^n \times \mathbb{R}^m \times \mathbb{N} \to \mathbb{R}^n$ is a given vector field. The following lemma shows that this class of problems satisfies the principle of optimality.\\
 \begin{lem}
Any Problem of Form $P(t_0,x_0)$ in (2) satisfies the principle of optimality.
 \end{lem}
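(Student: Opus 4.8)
The plan is to exploit the additive (separable) structure of the objective in~\eqref{eqn:DP} to split any trajectory cost into a ``head'' cost over the interval $[t,s)$ and a ``tail'' cost over $[s,T]$, and then to argue by contradiction. First I would fix $s,t$ with $t_0 \le t < s < T$ and suppose $\mathbf u^*=(u(t),\dots,u(T-1))$ and $\mathbf x^*=(x(t),\dots,x(T))$ solve $G(t,x(t))$. Breaking the defining sum at the index $s$ yields
\begin{align}
J_{t,x(t)}(\mathbf u^*,\mathbf x^*) = \sum_{\tau=t}^{s-1} c_\tau(x(\tau),u(\tau)) + \sum_{\tau=s}^{T-1} c_\tau(x(\tau),u(\tau)) + c_T(x(T)). \nonumber
\end{align}
The crucial observation is that the last two terms are exactly $J_{s,x(s)}(\mathbf v,\mathbf h)$, where $\mathbf v=(u(s),\dots,u(T-1))$ and $\mathbf h=(x(s),\dots,x(T))$ are the tail sequences from Definition~3. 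Denoting the head cost by $H := \sum_{\tau=t}^{s-1} c_\tau(x(\tau),u(\tau))$, which depends only on the already-fixed head of the optimal trajectory, we obtain the decomposition $J_{t,x(t)}(\mathbf u^*,\mathbf x^*) = H + J_{s,x(s)}(\mathbf v,\mathbf h)$.

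Next I would show that $(\mathbf v,\mathbf h)$ solves $G(s,x(s))$. Suppose, for contradiction, that it does not; then there exists a feasible pair $\mathbf v'=(v'(s),\dots,v'(T-1))$, $\mathbf h'$ for $G(s,x(s))$ with $J_{s,x(s)}(\mathbf v',\mathbf h') < J_{s,x(s)}(\mathbf v,\mathbf h)$. Using the feasibility-inheritance property noted immediately after Definition~1, the concatenation $\mathbf w=(u(t),\dots,u(s-1),v'(s),\dots,v'(T-1))$ together with its induced state sequence $\mathbf z$ is feasible for $G(t,x(t))$, and since $\mathbf z$ agrees with $\mathbf x^*$ on the head (in particular at the splice point $x(s)$), it produces the identical head cost $H$. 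The additive decomposition then gives
\begin{align}
J_{t,x(t)}(\mathbf w,\mathbf z) = H + J_{s,x(s)}(\mathbf v',\mathbf h') < H + J_{s,x(s)}(\mathbf v,\mathbf h) = J_{t,x(t)}(\mathbf u^*,\mathbf x^*), \nonumber
\end{align}
which contradicts the optimality of $(\mathbf u^*,\mathbf x^*)$ for $G(t,x(t))$. Hence $(\mathbf v,\mathbf h)$ must solve $G(s,x(s))$, which is precisely the statement of the principle of optimality.

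I expect the main subtlety to be bookkeeping rather than any deep difficulty: the argument rests on verifying that the head cost $H$ is genuinely unchanged when the tail is replaced, which hinges on the fact that the alternative trajectory $\mathbf z$ coincides with $\mathbf x^*$ on $[t,s]$, so that the same initial condition $x(s)$ is used in $G(s,x(s))$ and the same summands appear in $H$. It is the additive separability of~\eqref{eqn:DP} that makes this decomposition and substitution legitimate; the identical argument would fail for a supremum term precisely because the tail cost could no longer be isolated additively from the head, which is the obstruction motivating the augmented-state approach developed later in the paper.
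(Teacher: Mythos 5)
Your proposal is correct and follows essentially the same argument as the paper: split the additive objective at time $s$, assume for contradiction that a strictly better tail $(\mathbf v',\mathbf h')$ exists for the subproblem starting at $x(s)$, splice it onto the fixed head using feasibility inheritance, and conclude that the resulting trajectory strictly beats $(\mathbf u^*,\mathbf x^*)$, a contradiction. The only differences are cosmetic: you isolate the head cost $H$ explicitly and cite the feasibility-inheritance remark by name (and write $G(t,x(t))$ where the paper writes $P(t,x(t))$), while the paper carries the sums through directly.
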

 \begin{proof}
 	Suppose $\mathbf u^*=(u(t),...,u(T-1))$ and $ \mathbf x^*=(x(t),...,x(T))$ solve $P(t,x(t))$ in (2). Now we suppose by contradiction that there exists some $s >t$ such that $\mathbf v=(u(s),...,u(T-1))$ and $ \mathbf h=(x(s),...,x(T))$ do not solve $P(s,x(s))$. We will show that this implies that $\mathbf u^*$ and $ \mathbf x^*$ do not solve $P(t,x)$ in (2), thus verifying the conditions of the Principle of Optimality.  If $\mathbf v$ and $ \mathbf h$ do not solve $P(s,x(s))$, then there exist feasible $\mathbf{w}$, $\mathbf z$ such that
 $ J_{s,x(s)}(\mathbf v, \mathbf h) < J_{s,x(s)}(\mathbf w, \mathbf z) $. i.e.

 	\begin{align*}
 	&  J_{s,x(s)}(\mathbf w,\mathbf z )\\
 	& = \sum_{t=s}^{T-1}{c_{t}(z(t),w(t))} + c_{T}({z(T)}) \\
 	& < \sum_{t=s}^{T-1}{c_{t}(x(t),u(t))} + c_{T}({x(T)}) \\
 	& =J_{s,x(s)}(\mathbf v,\mathbf h )
 	\end{align*}
 	Now consider the proposed feasible sequences $\hat{\mathbf u}=(u(t),...,u(s-1), w(s),...,w(T-1))$ and $\hat{\mathbf x}=(x(t),...,x(s-1), z(s),...,z(T-1))$. It follows:
 	\begin{align*}
 	& J_{t,x(t)}(\hat{\mathbf u},\hat{\mathbf x} ) \\
 	& =\sum_{k=t}^{s-1}{c_{k}(x(k),u(k))} + \sum_{k=s}^{T-1}{c_{k}(z(k),w(k))} + c_{T}({z(T)}) \\
 	& < \sum_{k=t}^{s-1}{c_{k}(x(k),u(k))}  + \sum_{k=s}^{T-1}{c_{k}(x(k),u(k))} + c_{T}({x(T)}) \\
 	& = J_{t,x(t)}(\mathbf u^*,\mathbf x^* )
 	\end{align*}
 	which contradicts optimality of $\mbf u^*, \mbf x^*$. Therefore, this class of problems satisfies the principle of optimality.
 	\end{proof}

 \begin{prop} \label{prop:bellman}
 	Consider the class of optimization problems $P(t_0,x_0)$ in (2). If we define $F(x,t)= J_{t,x}^*$, then the following hold for for all $x \in X$.
 	  	\begin{align}
 	  	& F(x,t)=\inf_{u \in \Gamma_{t,x}}\{c_t(x,u)+F(f(x,u,t),t+1)\} \label{eqn:bellman} \\
 	  	& \qquad \qquad \qquad \qquad \qquad \qquad \forall t \in \{t_0,..,T-1\} \notag \\
 	  	& F(x,T)=c_T(x) \qquad \forall x \in X \notag
 	  	\end{align}
 \end{prop}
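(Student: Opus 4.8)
The plan is to establish the terminal condition directly and then prove the recursive identity \eqref{eqn:bellman} by a standard two-sided inequality, relying on Lemma~1 (the principle of optimality for $P(t_0,x_0)$) and on the feasibility-inheritance property noted after Definition~1. The structural fact I would use throughout is that the additive objective \eqref{eqn:DP} splits after the first stage: writing $\mathbf v=(u(t+1),\dots,u(T-1))$, $\mathbf h=(x(t+1),\dots,x(T))$ and $x(t+1)=f(x,u(t),t)$,
\[
J_{t,x}(\mathbf u,\mathbf x) = c_t(x,u(t)) + J_{t+1,x(t+1)}(\mathbf v,\mathbf h),
\]
which is immediate from separating the $k=t$ term of the sum. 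For the terminal condition, at $t=T$ there are no controls to choose and $J_{T,x}=c_T(x)$ by the remark following \eqref{eqn:DP}, so $F(x,T)=J^*_{T,x}=c_T(x)$ trivially.

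For the $\le$ direction, I would fix $x\in X$ and $t\in\{t_0,\dots,T-1\}$ and take any $u\in\Gamma_{t,x}$, so that $\tilde x:=f(x,u,t)\in X$. Let $(\mathbf v^*,\mathbf h^*)$ solve $P(t+1,\tilde x)$, attaining value $F(\tilde x,t+1)$, and prepend $u$ to form a control--state pair for $P(t,x)$. Since $u\in U$ with $f(x,u,t)=\tilde x\in X$ and $(\mathbf v^*,\mathbf h^*)$ is feasible for $P(t+1,\tilde x)$, this concatenation is feasible for $P(t,x)$ by feasibility inheritance, and by the displayed splitting its cost is $c_t(x,u)+F(\tilde x,t+1)$. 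Because $F(x,t)$ is the minimum over all feasible pairs, $F(x,t)\le c_t(x,u)+F(\tilde x,t+1)$; taking the infimum over $u\in\Gamma_{t,x}$ gives the $\le$ inequality.

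For the $\ge$ direction, let $(\mathbf u^*,\mathbf x^*)$ solve $P(t,x)$ with $u^*:=u(t)$ and $\tilde x:=x(t+1)=f(x,u^*,t)$, so $F(x,t)=J_{t,x}(\mathbf u^*,\mathbf x^*)$. For $t\le T-2$, Lemma~1 applies with $s=t+1$ and certifies that the tail $(\mathbf v,\mathbf h)$ solves $P(t+1,\tilde x)$, hence attains $F(\tilde x,t+1)$; the case $t=T-1$ is handled directly since the tail is the empty sequence and $F(\cdot,T)=c_T(\cdot)$. In either case the splitting yields $F(x,t)=c_t(x,u^*)+F(\tilde x,t+1)\ge\inf_{u\in\Gamma_{t,x}}\{c_t(x,u)+F(f(x,u,t),t+1)\}$ since $u^*\in\Gamma_{t,x}$. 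Combining the two inequalities establishes \eqref{eqn:bellman}.

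The argument is structurally routine; the only delicate point is the appeal to attained optimizers. The $\le$ step selects an optimizer of the subproblem $P(t+1,\tilde x)$, and the $\ge$ step invokes both existence of an optimizer of $P(t,x)$ and Lemma~1 to guarantee that its tail is itself optimal for that subproblem — this is exactly where the principle of optimality does the real work of linking the value of $P(t,x)$ to the one-step-ahead value. Consistent with the $\arg\min$ formulation of \eqref{eqn:opt}, I would assume throughout that these minima are attained; if one prefers to avoid this assumption, each ``optimal'' selection can be replaced by an $\eps$-optimal one and both inequalities recovered by letting $\eps\to 0$.
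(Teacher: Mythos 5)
Your proposal is correct and follows essentially the same route as the paper's proof: the terminal condition is immediate, the $\ge$ direction uses the principle of optimality (Lemma~1) to certify that the tail of an optimizer of $P(t,x)$ attains $F(f(x,u^*,t),t+1)$, and the $\le$ direction concatenates an arbitrary $u\in\Gamma_{t,x}$ with a solution of the one-step-ahead subproblem and invokes feasibility inheritance. If anything, your write-up is slightly more careful than the paper's — you handle the edge case $t=T-1$ (where the definition of the principle of optimality, which requires $t<s<T$, does not directly apply) and you select an optimizer of $P(t+1,\tilde x)$ in the $\le$ step, which keeps every inequality pointing the right way, whereas the paper's chain uses an arbitrary feasible tail and states an inequality that only holds with the optimal one.
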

 \begin{proof}

Clearly $F(x,T)=c_T(x)$ for any $x$.\\
Now for any $x \in X$ and $t \in \{t_0,..,T-1\}$, suppose $\mathbf u^*=(u(t),..,u(T-1))$ and $\mathbf x^*=(x(t),..,x(T))$ solve $P(t,x)$. By the principle of optimality $\mathbf{v}=(u(t+1),..,u(T-1))$ and $\mathbf h=(x(t+1),..,x(T))$ solve $P(t+1,x(t+1))$. Therefore
\begin{equation}
F(x(t+1),t+1)=J_{x(t+1),t+1}^*=J_{x(t+1),t+1}(\mathbf{v},\mathbf h).
\end{equation}
We conclude that
\begin{align*}
F(x,t) & = J_{x,t}^* \\
& = J_{x,t}(\mathbf u^*, \mathbf x^*) \\
& = c_{t}(x,u(t)) + J_{x(t+1),t+1}(\mathbf{v},\mathbf h) \\
& = c_{t}(x,u(t)) + F(f(x,u(t),t),t+1) \text{  using (4)} \\
& \ge \inf_{u \in \Gamma_{t,x}}\{c_{t}(x,u) + F(f(x,u,t),t+1)\}
\end{align*}
holds for all $x$ and $t$.

Now we prove $F(x,t)\le \inf_{u \in \Gamma_{t,x}}\{c_{t}(x,u) + F(f(x,u,t),t+1)\}$.
For any $u\in \Gamma_{x,t}$, let $\mbf w_u = \{w(t+1),\cdots,w(T-1)\}$ and $\mbf h_u= \{f(x,u,t),z(t+2),\cdots,z(T)\}$ be feasible for $P(f(x,u,t),t+1)$. Then $\mbf v_u = \{u,w(t+1),\cdots,w(T-1)\}$ and $\mbf z_u= \{x,f(x,u,t),z(t+2),\cdots,z(T)\}$ are feasible for $P(t,x)$. Therefore,
\begin{align*}
F(x,t)&=J^*_{t,x}\le J_{t,x}(\mbf v_u,\mbf h_u)\\
&=c_{t}(x,u) + J_{f(x,u,t),t+1}(\mbf w_u,\mbf z_u)\\
&\le c_{t}(x,u) + F(f(x,u,t),t+1)\\
&\le \inf_u \{ c_{t}(x,u) + F(f(x,u,t),t+1)\}
\end{align*}


 \end{proof}
Note: Equation~\eqref{eqn:bellman} is often referred to as Bellman's Equation and a function $F$ which satisfies Bellman's equation  is often referred to as a ``cost to go'' function. Prop.~\ref{prop:bellman} shows that problems of the Form $P(t_0,x_0)$ admit a solution to Bellman's Equation which in turn indexes the optimal objective to the Problem. Furthermore, for problems  $P(t_0,x_0)$, the solution to Bellman's equation can be obtained recursively backwards in time using a minimization on $u$. When $x$ and $u$ are discrete, the RHS of Eqn.~\ref{eqn:bellman} takes a number of finite values and minimization over these values is trivial. When the variables are continuous, finding a functional form for the minimization step is more challenging. In either case, a solution to Bellman's equation provides a state-feedback law or optimal policy as follows.
\begin{cor}
Consider $P(t_0,x_0)$ in (2). Suppose $F(x,t)$ satisfies Equation~\eqref{eqn:bellman} for $P(t_0,x_0)$, Then
\[
\pi^*(x,t) = \arg \inf_{u \in \Gamma_{t,x}}\{c_t(x(t),u)+F(f(x(t),u,t),t+1)\}.
\]
is an optimal policy.
\end{cor}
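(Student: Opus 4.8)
The plan is to show that the closed-loop state and input trajectory generated by applying $\pi^*$ achieves a total cost exactly equal to $F(x_0,t_0)$, and then to identify $F(x_0,t_0)$ with the optimal value $J^*_{t_0,x_0}$. First I would argue that the function $F$ appearing in the statement coincides with the optimal value function. Indeed, Bellman's Equation~\eqref{eqn:bellman} together with the terminal condition $F(x,T)=c_T(x)$ constitutes a backward recursion that determines $F(\cdot,t)$ uniquely from $F(\cdot,t+1)$. Since Proposition~\ref{prop:bellman} establishes that $J^*_{t,x}$ satisfies this same recursion with the same terminal data, uniqueness of the recursion gives $F(x,t)=J^*_{t,x}$ for all $x\in X$ and all $t$.

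Next I would construct the closed-loop trajectory explicitly. Setting $x^*(t_0)=x_0$, define recursively $u^*(t)=\pi^*(x^*(t),t)$ and $x^*(t+1)=f(x^*(t),u^*(t),t)$ for $t=t_0,\dots,T-1$. Because $\pi^*(x,t)\in\Gamma_{t,x}$ by the definition of a policy, each $u^*(t)$ is admissible and each successor state satisfies $x^*(t+1)\in X$; hence $\mathbf u^*=(u^*(t_0),\dots,u^*(T-1))$ and $\mathbf x^*=(x^*(t_0),\dots,x^*(T))$ are feasible for $P(t_0,x_0)$. Moreover, since $u^*(t)$ attains the infimum in~\eqref{eqn:bellman} evaluated at the state $x^*(t)$, I would record the key stagewise identity
\begin{equation}
F(x^*(t),t)=c_t(x^*(t),u^*(t))+F(x^*(t+1),t+1), \label{eqn:stagewise}
\end{equation}
valid for each $t\in\{t_0,\dots,T-1\}$.

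The central step is then a telescoping sum. Summing~\eqref{eqn:stagewise} over $t=t_0,\dots,T-1$ and cancelling intermediate terms leaves $F(x_0,t_0)-F(x^*(T),T)=\sum_{t=t_0}^{T-1}c_t(x^*(t),u^*(t))$. Using the terminal condition $F(x^*(T),T)=c_T(x^*(T))$ and rearranging yields
\begin{equation}
F(x_0,t_0)=\sum_{t=t_0}^{T-1}c_t(x^*(t),u^*(t))+c_T(x^*(T))=J_{t_0,x_0}(\mathbf u^*,\mathbf x^*). \notag
\end{equation}
Combining this with the identification $F(x_0,t_0)=J^*_{t_0,x_0}$ from the first step shows that the feasible pair $(\mathbf u^*,\mathbf x^*)$ generated by $\pi^*$ attains the optimal objective, so by definition $\pi^*$ is an optimal policy.

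I expect the main obstacle to be the first step: rigorously justifying that $F$ equals the optimal value function $J^*$. This rests on uniqueness of solutions to the backward recursion~\eqref{eqn:bellman}, which in turn presumes that the infimum is attained so that $\arg\inf$, and hence the policy $\pi^*$, is well defined, an assumption implicit in the statement. A secondary point worth checking is that the $\arg\inf$ may be set-valued; any selection suffices here, since the telescoping argument only uses that the chosen $u^*(t)$ achieves the infimum at the state $x^*(t)$ actually reached along the closed-loop trajectory.
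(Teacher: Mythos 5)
Your proof is correct; note that the paper itself offers no proof of this corollary, stating it as an immediate consequence of Proposition~\ref{prop:bellman}, so your write-up supplies the verification argument that the paper leaves implicit. Your route has two ingredients: (i) the identification $F=J^*$, obtained by observing that Equation~\eqref{eqn:bellman} with terminal data $F(x,T)=c_T(x)$ is an explicit backward recursion on $X$ (well-posed because $f(x,u,t)\in X$ whenever $u\in\Gamma_{t,x}$), so its solution is unique and must coincide with the solution $J^*_{t,x}$ furnished by Proposition~\ref{prop:bellman}; and (ii) the telescoping identity along the closed-loop trajectory, whose feasibility follows from $\pi^*(x,t)\in\Gamma_{t,x}$ and the paper's standing assumption that $\Gamma_{t,x}$ is nonempty. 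Together these show that the generated pair $(\mathbf{u}^*,\mathbf{x}^*)$ is feasible and attains $J^*_{t_0,x_0}$, which is exactly what Definition 2 requires of an optimal policy. Step (i) is genuinely needed here, since the corollary hypothesizes only that $F$ is \emph{some} solution of~\eqref{eqn:bellman} while Proposition~\ref{prop:bellman} asserts only that $J^*$ \emph{is} one; alternatively, you could bypass Proposition~\ref{prop:bellman} entirely by noting that every feasible $(\mathbf{u},\mathbf{x})$ satisfies $u(t)\in\Gamma_{t,x(t)}$, hence $F(x(t),t)\le c_t(x(t),u(t))+F(x(t+1),t+1)$, and telescoping gives $F(x_0,t_0)\le J_{t_0,x_0}(\mathbf{u},\mathbf{x})$; combined with your equality along the greedy trajectory, this proves $F(x_0,t_0)=J^*_{t_0,x_0}$ and optimality of $\pi^*$ in one stroke, making the corollary self-contained. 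Your closing caveats are apt: attainment of the infimum is implicitly assumed by the paper's use of $\arg\inf$, and any selection from a set-valued $\arg\inf$ suffices in your argument.
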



\noindent \textbf{Dynamic Programming with Supremum Terms}
In this paper we consider the special class of indexed optimization problem, $S(t_0,x_0)$. In contrast to problems of the form $P(t_0,x_0)$ in~\eqref{eqn:opt}, class $S(t_0,x_0)$ has supremum (or maximum) terms in the objective. Specifically, these problems have the following form. \small{ \begin{align}
   &\min_{ \mathbf u, \mbf x} J_{t_0,x_0} (\mbf u,\mbf x):=  \sum_{t=t_0}^{T-1}{c_{t}(x(t),u(t))} +c_{T}(x(T)) +\sup_{t_0 \le k \le T}{d_t(x({k}))} \label{eqn:S}\\ \nonumber
   &\text{subject to:  }  x(t+1)=f[x(t),u(t),t] \\ \nonumber
   & x(0)=x_0 \text{  given}\\ \nonumber
   & x(t) \in X \text{ for  } t=t_0,..,T \\ \nonumber
   & u(t) \in U \text{ for  } t=t_0,..,T-1  \nonumber
   \end{align}
} \normalsize	
   \begin{lem}
   	The class of optimization problems in~\eqref{eqn:S} does not satisfy the principle of optimality.
   	\end{lem}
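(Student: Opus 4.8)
The plan is to disprove the principle of optimality by exhibiting a single admissible instance of $S(t_0,x_0)$, together with times $t_0 \le t < s < T$, for which the tail of a global optimizer of $S(t,x(t))$ fails to optimize the subproblem $S(s,x(s))$. Because the definition of the principle of optimality is universally quantified over all problems in the class, one counterexample suffices to establish the negative claim.

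The mechanism I would exploit is that the term $\sup_{t \le k \le T} d_k(x(k))$ is recomputed over the \emph{entire} remaining horizon in each indexed problem. If the initial segment of an optimal trajectory of $S(t,x(t))$ drives some $d_k(x(k))$ to a large value, then in the full problem this peak has already been incurred, so the tail controls are free to minimize the running cost alone even when doing so inflates the later states. The subproblem $S(s,x(s))$, however, evaluates the supremum only over $k \ge s$ and therefore ``forgets'' the earlier peak, so those same tail controls now pay a penalty the full problem ignored. This forgetting is exactly the non-separability that I expect to break the recursion.

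Concretely, I would take $n=m=1$, $T=2$, $t_0=0$, with trivial dynamics $f(x,u,t)=u$, constraint sets $X=U=[0,10]$, peak functions $d_k(x)=x$, terminal cost $c_2 \equiv 0$, and a single nonzero running cost $c_1(x,u) = -\tfrac12 u$ with $c_0 \equiv 0$, starting from the maximal initial state $x_0 = 10$. For the full problem $S(0,10)$ the supremum is pinned at $\max(10,u(0),u(1)) = 10$ because every admissible state lies in $[0,10]$, so the objective reduces to $10 - \tfrac12 u(1)$ and the optimum pushes $u(1)$ to its upper bound $10$; taking $u(0)=0$ yields the optimal trajectory $\mathbf{x}^* = (10,0,10)$. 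For the subproblem $S(1,0)$ the objective becomes $-\tfrac12 u(1) + \max(0,u(1)) = \tfrac12 u(1)$, whose minimizer is $u(1)=0$, so the tail $u(1)=10$ inherited from $S(0,10)$ is strictly suboptimal for $S(1,0)$ and the principle of optimality fails.

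The delicate point, and the step I expect to be the main obstacle, is calibrating the running reward against the peak so that the strict inequality falls the right way in \emph{both} problems simultaneously: the reward per unit control must be strictly smaller than the unit slope at which the control inflates the supremum, so that in the subproblem the recomputed peak strictly dominates the reward and forces $u(1)$ down, while in the full problem the early peak strictly dominates the control's effect on the supremum, leaving the reward as the only active term and forcing $u(1)$ up. If the reward exactly matched the peak slope, every tail control would tie and no violation would appear, so the construction hinges on this strict mismatch. It then remains only to verify admissibility, namely that $\Gamma_{t,x}$ is nonempty and the trajectory stays in $X$, which is immediate here since any $u \in [0,10]$ maps into $X$.
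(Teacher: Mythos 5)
Your proof is correct and follows essentially the same route as the paper: both exhibit a counterexample exploiting the fact that an early state peak pins the supremum, so the globally optimal tail can free-ride on that peak in the full problem, while the subproblem recomputes the supremum over the shorter horizon and therefore penalizes that same tail. The only differences are cosmetic --- the paper uses a three-step integrator with discrete inputs $u(t)\in\{-h,0,h\}$ and verifies optimality by enumerating the eight feasible input sequences in Table~\ref{tab:counter}, whereas you use a two-step instance with continuous input set and memoryless dynamics $f(x,u,t)=u$, verified analytically.
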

   	\begin{proof}
   		We give a counterexample. For $h > 0$, we consider the following problem $S(0,0)$:
    		\begin{align*}
    		&  \min_{\mathbf{u}\in \R^3, \mbf x\in \R^4} \quad {\sum_{t=0}^{2}{c_{t}(u({t}))} + \sup_{0 \le k \le 3}{x(k)}}\\
    			& \text{subject to:  } x(t+1)=x(t)+u(t), \quad x(0)=0 \\
    			& 0 \le x_t \le h, \\
    			& u(t) \in \{-h,0,h\}	
    			\end{align*}  	
Where here we define $c_{0}(u(0))=-u(0),$ $c_{1}(u(1))=u(1),$ $c_{2}(u(2))=-u(2)/2$.\\
Since $\mbf u \in \{-h,0,h\}^3$, there are 27 input sequences, only 8 of which are feasible. In Table~\ref{tab:counter}, we calculate the objective value of each feasible input sequence and deduce the optimal input is $\mbf{u}=(h,-h,h)$. Now suppose we follow this input sequence until $t=2$ yielding $x(2)=0$. Now we examine the problem $S(2,x(2))$.
    		\begin{align*}
    		& \min_{\mathbf{u} \in \R \mbf x \in x }{c_{2}(u({2})) + \sup_{2 \le k \le 3}{x(k)}}\\
    		& \text{subject to:  } x(t+1)=x(t)+u(t), \quad x(2)=0 \\
    		& 0 \le x(t) \le h, \\
    		& u(t) \in \{-h,0,h\}	
    		\end{align*}
    For this sub-problem, there are two feasible inputs: $u(3)\in \{-h,0\}$. Of these, the latter is optimal (objective value $h/2$ vs $0$). Thus we see that although $\mbf u=\{h,-h,h\}$ and $\mbf x=\{0,h,0,h\}$ solve $S(0,0)$,  $\mbf v =\{h\}$ and $\mbf h =\{0,h\}$ do not solve $S(2,0)$.
%
%
   		\end{proof}
   		\begin{table}
   			\centering
   			\caption{This table shows the corresponding cost of each feasible policy used in the counter example in lemma 1}
   			\label{tab:counter}
   			\begin{tabular}{|l|l||l|l|l}
   				\cline{1-4}
   				feasible $\mbf u$  & objective value       & feasible $\mbf u$  & objective value   \\ \cline{1-4}
   				$(0,0,0)$  & 0 &  $(h,0,-h)$    & h/2  &  \\ \cline{1-4}
   				$(0,0,h)$    & h/2       & $(h,0,0)$ & 0  &  \\ \cline{1-4}
   				$(0,h,0)$ & 2h        & $(h,-h,0)$  & -h  &  \\ \cline{1-4}
   				$(0,h,-h)$  & (5/2)h & $(h,-h,h)$    & -(3/2)h  &  \\ \cline{1-4}
   			\end{tabular}
   		\end{table}

 \section{Solution Methodology: Augmented Dynamic Programming}\label{sec:ADP}

 In this section we will define what a forward separable objective function is and later show that the supremum is an example of such a function. We will show that for dynamic programming problems with a forward separable objective function, augmenting the state variables allows us to use standard dynamic programming techniques to solve the problem.\\
 \\

 \begin{defn}[\cite{Envelope}]
 	The function $J(\mbf u, \mbf x)$ is said to be forward separable if there exists functions $\phi_0(x,u)$, $\phi_T(x,\phi_{T-1})$, and $\phi_i(x,u,\phi_{i-1})$ for $i=1,\cdots T-1$ such that
  	\begin{align} \label{forward_sep_def}
 	& J(\mbf u, \mbf x)=\phi_T( x(T),\phi_{T-1}[x(T-1),u(T-1),\phi_{T-2}\{....,\\ \nonumber
 	&\qquad \phi_{2}\{x(2),u(2), \phi_{1}\{x(1),u(1), \phi_{0}\{x(0),u(0)\}\}\},....,\}])\\ \nonumber
 	\end{align}
where $\phi_t\; :\; \mathbb{R}^{n} \times \mathbb{R}^{p} \times \mathbb{R}^q \to \mathbb{R}^q $, for $t=1,...,T-1$ and $\phi_{T}: \mathbb{R}^n \times \mathbb{R}^q \to \mathbb{R}$, $\phi_{0}: \mathbb{R}^n \times \mathbb{R}^p \to \mathbb{R}^q$ .
 \end{defn}

Clearly, any objective function of the form
\[
J(\mbf u, \mbf x) = \sum_{t=t_0}^{T-1} c_t(u(t),x(t)) + c_T(x(T))
\]
is forward separable using $\phi_0(x,u)=c_0(x,u)$, $\phi_T(x,\phi_{T-1}=c_T(x)+\phi_{T-1}$ and
\[
\phi_i(x,u,\phi_{i-1})=c_i(x,u)+\phi_{i-1} \quad \text{for } i=1,\cdots,T-1
\]
In addition, it can be shown that the sum of any number of forward separable functions is forward separable. For example, let $J_1(\mbf u,\mbf x)$ and $J_2(\mbf u, \mbf x)$ be forward separable with associated $\phi_i= g_i$ and $\phi_i=h_i$, respectively. Then $J_1+J_2$ is forward separable with
\begin{equation}
\phi_i(x,u,\phi_{i-1})=\bmat{\phi^1_i(x,u,\phi_{i-1})\\ \phi_i^2(x,u,\phi_{i-1})} = \bmat{g_i(x,u,\phi_{i-1}^1)\\h_i(x,u,\phi_{i-1}^2)} \label{eqn:two_forward_sep}
\end{equation}

and
\[
\phi_T(x,u,\phi_{T-1}) = g_T(x,u,\phi_{T-1}^1) + h_T(x,u,\phi_{T-1}^2).
\]
Clearly,
\[
\phi_0(x,u)=\bmat{\phi^1_0(x,u)\\ \phi_0^2(x,u)} = \bmat{g_0(x,u)\\h_0(x,u)}.
\]

 We now show that the supremum (maximum) function is forward separable.
 \begin{lem}
 	\[
 J(\mbf u, \mbf x)=\max\{\sup_{0 \le k \le T-1}\{c_k(u(k),x(k))\}, c_T(x(T))\}
 \]
 is a forward separable objective function.\\
 \end{lem}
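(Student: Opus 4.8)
The plan is to exploit the associativity and idempotence of the maximum operation, which allow the global maximum to be accumulated as a running maximum propagated forward in time. Concretely, I would take the intermediate dimension $q=1$ and define the scalar-valued maps
\[
\phi_0(x,u) = c_0(u,x), \qquad \phi_i(x,u,\phi_{i-1}) = \max\{c_i(u,x),\,\phi_{i-1}\}
\]
for $i=1,\dots,T-1$, together with the terminal map $\phi_T(x,\phi_{T-1}) = \max\{c_T(x),\,\phi_{T-1}\}$. These have exactly the signatures required by the definition of forward separability (with $p=m$), since each $\phi_i$ maps $\R^n \times \R^m \times \R \to \R$, $\phi_0$ maps $\R^n \times \R^m \to \R$, and $\phi_T$ maps $\R^n \times \R \to \R$.

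The key step is then to prove by induction on $i$ that the partial composition collapses to a running maximum, namely
\[
\phi_i = \max_{0 \le k \le i}\, c_k(u(k),x(k)) \qquad \text{for } i = 0,1,\dots,T-1.
\]
The base case $i=0$ holds immediately from the definition of $\phi_0$. For the inductive step, assuming the claim at index $i-1$, the definition of $\phi_i$ together with associativity of $\max$ gives $\phi_i = \max\{c_i(u(i),x(i)),\, \max_{0 \le k \le i-1} c_k(u(k),x(k))\} = \max_{0 \le k \le i} c_k(u(k),x(k))$, completing the induction.

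Finally I would specialize to $i=T-1$, which yields $\phi_{T-1} = \sup_{0 \le k \le T-1} c_k(u(k),x(k))$, and substitute into the terminal map to obtain $\phi_T = \max\{c_T(x(T)),\, \sup_{0 \le k \le T-1} c_k(u(k),x(k))\} = J(\mbf u,\mbf x)$, which matches the nested composition demanded by the definition. I do not anticipate a substantive obstacle here: the entire content of the proof is the observation that $\max$ is associative, so that the global supremum can be rewritten as an iterated two-argument maximum with a one-dimensional accumulator. The only point requiring care is bookkeeping, namely checking that the recursively defined $\phi_i$ slot into the exact nesting order of the forward-separable form and that the dimension choice $q=1$ is consistent across $\phi_0$, the $\phi_i$, and $\phi_T$.
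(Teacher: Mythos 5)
Your proposal is correct and is essentially identical to the paper's own proof: the paper chooses exactly the same maps $\phi_0(x,u)=c_0(x,u)$, $\phi_i(x,u,\phi_{i-1})=\max(c_i(x,u),\phi_{i-1})$, and $\phi_T(x,\phi_{T-1})=\max(c_T(x),\phi_{T-1})$, justifying them by unfolding the supremum into nested pairwise maxima via associativity. Your explicit induction on the running maximum is just a more carefully written version of that same unfolding, so there is nothing substantively different between the two arguments.
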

 \begin{proof}
 	\small{ \begin{align*}
 	J(\mbf u, \mbf x)& = \max\{\sup_{0 \le k \le T-1}\{c_k(u(k),x(k))\}, c_T(x(T))\}\\
 	& = \max\{c_T(x(T)),\max\{c_{T-1}(u(T-1),x(T-1)),\cdots\\
 &\qquad \max\{..,\max\{c_1(u(1),x(1)),\max\{c_0(u(0),x(0))\}\},..\}\}
 	\end{align*} }
 \normalsize
so that
\begin{align*}
\phi_i(x,u,\phi_{i-1})&=\max(c_i(x,u),\phi_{i-1}), \quad \phi_0(x,u)=c_0(x,u),\\
\phi_T(x,\phi_{T-1}) &= \max (c_T(x),\phi_{T-1})
\end{align*}
 \end{proof}

\subsection{Forward Separable Dynamic Programming}

We may now define the class of indexed forward separable problems $H(t_0,x_0)$ so that $H$ is of class $G$, but not of class $P$ and has the form:
 \begin{align}\nonumber
 &\min_{\mathbf u, \mbf x} J_{t_0,x_0}(\mbf u, \mbf x) \\ \label{opt:forward_sep}
 &\text{subject to:  }  x(t+1)=f[x(t),u(t),t]\\ \nonumber
 & x(0)=x_0 \\ \nonumber
 & x(t) \in X \subset \mathbb{R}^n \text{ for  } t=1,..,T \\ \nonumber
 & u(t) \in U \subset \mathbb{R}^p \text{ for  } t=0,..,T-1  \nonumber
 \end{align}

where $J_{t_0,x_0}$ is forward separable with associated $\phi_i$. For every instance of a forward separable dynamic programming problem $H(t_0,x_0)$, we may associate  a new optimization problem $A(t_0,x_0)$, which is equivalent to $H(t_0,x_0)$ in a certain sense and which satisfies the principle of optimality. $A(t_0,x_0)$ is defined as follows.

\small{
 	\begin{align} \nonumber
 	& \min_{ \mathbf u} L_{t_0,x_0} (\mathbf u, \mbf x)=   z_{2}(T+1) \\ \label{augmentation}
 &	\text{subject to:  }  \bmat{z_1(t+1)\\z_2(t+1)}= \bmat{f(z_1(t),u(t)) \\ \phi_{t}(z_1(t),u(t), z_2(t)) } 1<t<T\\ \nonumber
 	& \bmat{z_1(1)\\z_2(1)}= \bmat{f(z_1(0),u(0)) \\ \phi_{0}(z_1(0),u(0)) }, \quad  \bmat{z_1(T+1)\\z_2(T+1)}= \bmat{z_1(T) \\ \phi_{T}(z_1(T),z_2(T)) }\\ \nonumber
 	& \qquad \hspace{-7mm} \bmat{z_1(0)\\z_2(0)} = \bmat{x_0\\ 0} \\ \nonumber 
 	 & z_{1}(t) \in X \text{ for  } t=1,..,T \\ \nonumber
 	 & u(t) \in U \text{ for  } t=1,..,T  \nonumber
 	\end{align} } \normalsize
Where the solution to $H(t_0,x_0)$ can be recovered as $x(t)=z_1(t)$.\\


 \begin{lem}
 	Suppose $J_{t_0,x_0}(\mbf u, \mbf x)$ is forward separable with associated $\phi_i$. Then $J^*_{t_0,x_0}=L^*_{t_0,x_0}$. Furthermore, suppose $\mbf u$ and $\mbf x$ solve $H(t_0,x_0)$  and $\mbf w$ and $\mbf z$ solve $A(t_0,x_0)$. Then $\mbf u=\mbf w$ and $x(t)=z_1(t)$ for all $t$.
 \end{lem}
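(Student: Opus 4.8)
The plan is to show that $A(t_0,x_0)$ is nothing more than $H(t_0,x_0)$ with the forward-separable nesting unrolled into an auxiliary state $z_2$, so that the two problems share the same feasible set of controls and assign the same value to every feasible control. Equality of optimal values and of optimizer sets then follows immediately. Throughout I would take the control sequence $\mbf u$ as the common decision variable, let $\mbf x$ denote the trajectory it generates under $f$ in $H(t_0,x_0)$, and let $\mbf z = (z_1,z_2)$ denote the trajectory it generates in $A(t_0,x_0)$.

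First I would establish feasibility equivalence. Since the $z_1$-component of the augmented dynamics obeys $z_1(t+1) = f(z_1(t),u(t),t)$ with $z_1(0) = x_0$, which is exactly the recursion and initial condition defining $\mbf x$ in $H(t_0,x_0)$, a trivial induction on $t$ gives $z_1(t) = x(t)$ for all $t$ and every fixed $\mbf u$. Consequently the state constraints $z_1(t) \in X$ in $A(t_0,x_0)$ are identical to $x(t) \in X$ in $H(t_0,x_0)$, and the input constraints $u(t) \in U$ are common to both. Hence a control $\mbf u$ is feasible for $H(t_0,x_0)$ if and only if it is feasible for $A(t_0,x_0)$, and in that case $\mbf x = z_1$ automatically.

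Next I would show the objectives agree on this common feasible set. Fixing a feasible $\mbf u$, define the partial nestings $\Phi_0 = \phi_0(x(0),u(0))$ and $\Phi_t = \phi_t(x(t),u(t),\Phi_{t-1})$ for $t = 1,\dots,T-1$. The claim $z_2(t+1) = \Phi_t$ for $t = 0,\dots,T-1$ is proved by induction: the base case is $z_2(1) = \phi_0(z_1(0),u(0)) = \phi_0(x_0,u(0)) = \Phi_0$, and the step is $z_2(t+1) = \phi_t(z_1(t),u(t),z_2(t)) = \phi_t(x(t),u(t),\Phi_{t-1}) = \Phi_t$, using $z_1(t) = x(t)$ and the inductive hypothesis. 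Evaluating the terminal map then gives $L_{t_0,x_0}(\mbf u,\mbf x) = z_2(T+1) = \phi_T(z_1(T),z_2(T)) = \phi_T(x(T),\Phi_{T-1})$, which is precisely the forward-separable expression for $J_{t_0,x_0}(\mbf u,\mbf x)$. Thus $L_{t_0,x_0} \equiv J_{t_0,x_0}$ on the shared feasible set.

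Finally, because the two problems minimize equal objective functions over an identical feasible set, their infima coincide, giving $J^*_{t_0,x_0} = L^*_{t_0,x_0}$, and their sets of minimizing controls coincide; in particular any optimal $\mbf w$ for $A(t_0,x_0)$ is optimal for $H(t_0,x_0)$, with the matching $x(t) = z_1(t)$ following from feasibility equivalence. The mathematics here is routine, so the only real effort is bookkeeping. The chief nuisance is keeping the index conventions of the two displays aligned, so that the $t=1$ and $t=T$ boundary maps in $A(t_0,x_0)$ correspond respectively to $\phi_0$ and $\phi_T$ in the definition of forward separability; I would also remark that the literal statement $\mbf u = \mbf w$ is precise only when the minimizer is unique, the honest general conclusion being equality of the argmin sets, from which $x(t) = z_1(t)$ follows as above.
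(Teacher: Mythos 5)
Your proposal is correct and takes essentially the same route as the paper's own proof: establish that $z_1$ replicates $x$ under the shared dynamics (so feasibility transfers), unroll the $z_2$ recursion to show $L_{t_0,x_0}$ equals the nested forward-separable expression for $J_{t_0,x_0}$ on feasible points, and conclude equality of the optima. Your closing caveat is also well taken: the literal claim $\mbf u=\mbf w$ holds only under uniqueness of the minimizer, a point the paper's proof silently passes over since it only establishes that solutions of $A(t_0,x_0)$ yield solutions of $H(t_0,x_0)$ with equal objective value.
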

 \begin{proof}
Suppose  $\mbf w$ and $\mbf z$ solve $A(t_0,x_0)$. First we show that $\mbf w$ and $\mbf z_1$ are feasible for $H(t_0,x_0)$. Clearly $w(t) \in U$ for all $t$ and if we let $\mbf u = \mbf w$ then $x(0)=x_0$ and $x(t+1)=f[x(t),u(t),t]$ for all $t$. Since likewise $z_1(0)=x_0$ and $z_1(t+1)=f[z_1(t),u(t),t]$, we have $x(t)=z_1(t)\in X$ for all $t$. Hence $\mbf u$ and $\mbf x=\mbf z_1$ are feasible for $H(t_0,x_0)$.
Likewise, if $\mbf u$ and $\mbf x$ solve $H(t_0,x_0)$, then if we let $\mbf w=\mbf u$ and $\mbf z_1=\mbf x$ and define $z_2(t+1)=\phi_{t}(z_1(t),u(t), z_2(t))$, $z_2(1)=\phi_{0}(z_1(0),u(0))$, $z_2(0)=0$, then $\mbf w$ and $\mbf z$ are feasible. Furthermore, in both cases, if we examine the objective value
  	\begin{align*}
 	&J(\mbf u, \mbf x)=\phi_T( z_1(T),\phi_{T-1}[z_1(T-1),w(T-1),\phi_{T-2}\{....,\\ \nonumber
 	&\quad \phi_{2}\{z_1(2),w(2), \phi_{1}\{z_1(1),w(1), \phi_{0}\{z_1(0),w(0)\}\}\},....,\}]).\\ \nonumber
 	\end{align*}
However, we now observe
\begin{align*}
 	&z_2(T+1)= \phi_T(z_1(T),z_2(T))\\
 	&z_2(T)= \phi_{T-1}(z_1(T-1),u(T-1),z_2(-1))\\
 	&\vdots\\
 	&z_2(2)= \phi_1(z_1(1),u(1),z_2(1))\\
 	&z_2(1)= \phi_0(z_1(0),u(0)).
\end{align*}
Hence we have

\small{
\begin{align*}
   L(\mbf w, \mbf z) & = z_1(T+1) \\
  &=\phi_T(z_1(T),u(T),\phi_{T-1}(z_1(T-1), u(T-1),\phi_{T-2}(\cdots,\\
  & \qquad  \phi_1(z_1(1),u(1),z_2(1), \phi_0(z_1(0),u(0))) \cdots )))\\
  &=J(\mbf u, \mbf x).
\end{align*} } \normalsize

Hence if $\mbf w$ and $\mbf z$ solve $A(t_0,x_0)$ with objective $L^*_{t_0,x_0}=z_2(T+1)$, then $\mbf w$ and $\mbf z_1$ solve $H(t_0,x_0)$ with objective value $J^*_{t_0,x_0}=L^*_{t_0,x_0}=z_2(T+1)$.
 \end{proof}

 \begin{prop}
 	The augmented optimization problem $A(t_0,x_0)$ in \eqref{augmentation} satisfies the Principle of Optimality and the Bellman equation \eqref{eqn:bellman}.
 \end{prop}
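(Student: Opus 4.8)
The plan is to observe that the augmented problem $A(t_0,x_0)$ is \emph{itself} an instance of the separable class $P(t_0,x_0)$ from~\eqref{eqn:DP}, merely posed over the augmented state, and then to invoke the results already proved for that class. Concretely, I would set the augmented state $z(t)=(z_1(t),z_2(t))\in\R^n\times\R^q$ with augmented vector field
\[
\tilde f(z,u,t)=\bmat{f(z_1,u,t)\\ \phi_t(z_1,u,z_2)}
\]
for interior times, together with the special initial map $z_2(1)=\phi_0(z_1(0),u(0))$ (which ignores the fixed constant $z_2(0)=0$) and the control-free terminal map $z(T+1)=(z_1(T),\phi_T(z_1(T),z_2(T)))$. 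Under this identification the objective $L_{t_0,x_0}(\mbf u,\mbf z)=z_2(T+1)$ is \emph{purely terminal}: in the template~\eqref{eqn:DP} this corresponds to running costs $c_t\equiv 0$ for every $t$ and a terminal cost $c_{T+1}(z)=z_2$, over the extended horizon $\{t_0,\dots,T+1\}$.

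First I would verify that the feasibility data of $A(t_0,x_0)$ fit the $P$ template. The state constraint $z_1(t)\in X$ with $z_2(t)$ unconstrained is captured by taking the augmented admissible set to be $X\times\R^q\subset\R^{n+q}$, while the input constraint $u(t)\in U$ is unchanged. Because the class $P$ already permits a time-varying vector field, the three regimes (the nonstandard initialization at $t=0$, the interior updates, and the control-free terminal update from $T$ to $T+1$) are all admitted. I would also confirm that the augmented $\Gamma_{t,z}$ is nonempty: since $z_2$ imposes no additional restriction on $u$, this is inherited directly from the standing nonemptiness assumption on $\Gamma_{t,x}$ for $H(t_0,x_0)$.

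With $A(t_0,x_0)$ established as a member of class $P$, the conclusion follows immediately. The Lemma asserting that every problem of Form $P(t_0,x_0)$ satisfies the principle of optimality applies verbatim, giving the first claim; and Proposition~\ref{prop:bellman} then guarantees that the cost-to-go $F(z,t)=L^*_{t,z}$ satisfies Bellman's equation~\eqref{eqn:bellman}, which here specializes (via $c_t\equiv 0$ and $c_{T+1}(z)=z_2$) to a backward recursion on $z_2$ driven purely by the maps $\phi_t$. No fresh dynamic-programming argument is required beyond the $P$-class results.

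The main obstacle I anticipate is bookkeeping rather than conceptual: one must check that the extended horizon $T+1$, the initial map in which $\phi_0$ depends only on $(z_1(0),u(0))$, and the terminal update that carries no control input can all be written as a single time-indexed vector field $\tilde f(\cdot,\cdot,t)$ conforming to~\eqref{eqn:DP}. Encoding ``no control at the terminal step'' (e.g.\ by letting $\tilde f$ simply ignore its $u$-argument at $t=T$) and aligning the index boundaries between the $\phi_t$ and the template's $c_t,c_{T+1}$ is the delicate part. Once this identification is made cleanly, the principle of optimality and the Bellman equation for $A(t_0,x_0)$ are inherited directly from the separable case.
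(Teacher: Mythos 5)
Your proposal is correct and follows exactly the paper's argument: the paper also proves this proposition by observing that $A(t_0,x_0)$ is a special case of $P(t_0,x_0)$ with zero running costs and a purely terminal cost, and then inherits the principle of optimality and Bellman equation from the $P$-class results. Your version simply spells out the bookkeeping (augmented state, extended horizon, feasibility) that the paper leaves implicit.
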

\begin{proof}
	$A(t_0,x_0)$ is a special case of $P(t_0,x_0)$ where $c_i=0$ for $i \neq T$.
	\end{proof}

To understand the augmented approach intuitively, we note that dynamic programming breaks a multi-period planning problem into simpler optimization problems at each stage. However, for non-separable problems, to make the correct decision at each stage we need historical data. In this context, the extra augmented state contains that part of the history necessary to make the correct decision at the present time.\\

\begin{cor}\label{cor:main}
 	$S(t_0,x_0)$ is a special case of $H(t_0,x_0)$.\\
\end{cor}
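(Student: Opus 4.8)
The plan is to prove the corollary by showing that the objective function of $S(t_0,x_0)$ in \eqref{eqn:S} is forward separable. Since the dynamics $x(t+1)=f[x(t),u(t),t]$ and the constraint sets $X$ and $U$ of $S(t_0,x_0)$ already coincide with those of $H(t_0,x_0)$ in \eqref{opt:forward_sep}, the only thing that distinguishes membership in class $H$ is forward separability of the objective; establishing this is therefore the entire content of the proof.

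To that end I would split the objective as $J_{t_0,x_0}(\mbf u, \mbf x) = J_1(\mbf u, \mbf x) + J_2(\mbf u, \mbf x)$, where $J_1(\mbf u, \mbf x) = \sum_{t=t_0}^{T-1} c_t(x(t),u(t)) + c_T(x(T))$ is the additive running-plus-terminal cost and $J_2(\mbf u, \mbf x) = \sup_{t_0 \le k \le T} d_k(x(k))$ is the supremum term. For $J_1$, I would recall from the discussion following the definition of forward separability that additive costs are forward separable via $\phi_0(x,u)=c_0(x,u)$, $\phi_i(x,u,\phi_{i-1})=c_i(x,u)+\phi_{i-1}$, and $\phi_T(x,\phi_{T-1})=c_T(x)+\phi_{T-1}$. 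For $J_2$, I would note that over the finite index set $\{t_0,\dots,T\}$ the supremum is a maximum, so $J_2$ is exactly of the form treated in the preceding lemma on the supremum (with the role of $c_k(u(k),x(k))$ played by $d_k(x(k))$, which simply carries no dependence on $u$), giving $\phi_i(x,u,\phi_{i-1})=\max(d_i(x),\phi_{i-1})$ and $\phi_T(x,\phi_{T-1})=\max(d_T(x),\phi_{T-1})$. Finally, I would invoke the fact established through the stacked construction in \eqref{eqn:two_forward_sep} that the sum of two forward separable functions is forward separable, and apply it to $J_1$ and $J_2$. This produces an augmented cost-state $\phi_i \in \R^2$ whose first coordinate accumulates the additive cost and whose second coordinate tracks the running maximum, so that $J_{t_0,x_0}$ is forward separable and $S(t_0,x_0)$ lies in class $H(t_0,x_0)$.

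The argument is essentially bookkeeping, so I do not anticipate a genuine obstacle; the one point requiring care is index alignment. In \eqref{eqn:S} the supremum ranges over $t_0 \le k \le T$ and hence includes the terminal state $x(T)$, whereas the max lemma isolates the terminal index in its own map $\phi_T$. I would therefore peel off $d_T(x(T))$ into $\phi_T$ rather than into a running map $\phi_i$, matching the lemma's structure exactly, and verify that the second cost-coordinate is initialized and propagated consistently with the dynamics of $A(t_0,x_0)$ in \eqref{augmentation}, so that the value recovered by the augmented problem is genuinely the maximum over the full horizon.
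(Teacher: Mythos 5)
Your proposal is correct and follows essentially the same route as the paper: decompose the objective of $S(t_0,x_0)$ into the additive (running-plus-terminal) part and the supremum part, observe that each is forward separable (the latter via the supremum lemma, with $d_k$ independent of $u$ and the terminal term $d_T(x(T))$ absorbed into $\phi_T$), and then combine them with the stacked construction of~\eqref{eqn:two_forward_sep}. The paper's proof does exactly this, writing the two components explicitly as $g_i$ and $h_i$ and stacking them into a vector-valued $\phi_i$, so your index-alignment caveat is handled there by the definition $h_T(x,\phi^2_{T-1})=\max(d_T(x),\phi^2_{T-1})$.
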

\begin{proof}
Consider the objective function from Problem $S(t_0,x_0)$ as
\[
J_{t_0,x_0}(\mbf u, \mbf x)=  \sum_{t=t_0}^{T-1}{c_{t}(x(t),u(t))} +c_{T}(x(T)) +\sup_{t_0 \le k \le T}{d_t(x({k}))}.
\]

Now this is the sum of two forward separable functions. As per the previous discussion \eqref{eqn:two_forward_sep}, then, we define
 \[
g_i(x,u,\phi^1_{i-1})=c_i(x,u)+\phi^1_{i-1} \quad \text{for } i=1,\cdots,T-1
\]
\[
g_0(x,u)=c_0(x,u), \quad g_T(x, \phi^1_{T-1})= c_T(x)+\phi^1_{T-1}
\]
and
\begin{align*}
h_i(x,u,\phi^2_{i-1})&=\max(d_i(x,u),\phi^2_{i-1}), \quad h_0(x,u)=d_0(x,u),\\
h_T(x,\phi^2_{T-1}) &= \max (d_T(x),\phi^2_{T-1})
\end{align*}
Then
\[
\phi_i(x,u,\phi_{i-1})=\bmat{\phi^1_i(x,u,\phi_{i-1})\\ \phi_i^2(x,u,\phi_{i-1})} = \bmat{g_i(x,u,\phi_{i-1}^1)\\h_i(x,u,\phi_{i-1}^2)}
\]
\[
\phi_T(x,u,\phi_{T-1}) = g_T(x,u,\phi_{T-1}^1) + h_T(x,u,\phi_{T-1}^2),
\]
and
\[
\phi_0(x,u)=\bmat{\phi^1_0(x,u)\\ \phi_0^2(x,u)} = \bmat{g_0(x,u)\\h_0(x,u)}
\]
establish forward separability of $J_{t_0,x_0}$ as per~\eqref{forward_sep_def}.
\end{proof}
 The $\phi_i$ specified in the proof of this Corollary define an instance of problem $H(t_0,x_0)$, which was shown to be equivalent to a class of optimization problems $A(t_0,x_0)$ by Lemma 4. Since problems of class $A(t_0,x_0)$ satisfy the principle of optimality, they can be solved using dynamic programming and their solution yields a solution to the original Problem $S(t_0,x_0)$. In the following section, we will apply this technique to optimal battery scheduling in the presence of demand charges.

\section{Application to the Energy Storage problem}\label{sec:battery1}
\label{sec:ProblemStatement}
In this section, we apply the augmented dynamic programming methodology to optimal scheduling of batteries in the presence of demand charges.
We first propose a simple model for the dynamics of the battery storage. We then formulate the objective function using electricity pricing plans which include demand charges. We see that the system described becomes an optimization problem of the form $H(0,e_0)$ \eqref{opt:forward_sep}.

\subsection{Battery Dynamics}
We will model the energy stored in the battery by the difference equation:
\begin{equation}
e({k+1})=\alpha (e({k})+\eta u({k})\Delta t)
\end{equation}
Where $e(k)$ denotes the energy stored in the battery at time step $k$, $\alpha$ is the bleed rate of the battery, $\eta$ is the efficiency of the battery, $u({k})$ denotes the charging/discharging $(+/-)$ at time step $k$ and $\Delta t$ is the amount of time passed between each time step. Moreover we denote the maximum charge and discharge rate by $\bar{u}$ and $\underline{u}$ respectively. Thus we have the constraint that $u({k}) \in [\underline{u},\bar{u}]:=U $ for all $k$. Similarly we also add the constraint $e(k) \in [\underline{e},\bar{e}]:=X$ for all $k$ where $\underline{e}$ and $\bar{e}$ are the capacity constraints of the battery (typically $\underline{e}=0$).

\subsection{The objective function}
Let us denote $q(k)$ to be the power supplied by the grid at time step k.
\begin{equation}
q(k)=q_{a}(k)-q_{s}(k)+u({k})
\end{equation}
where $q_{a}(k)$ is the power consumed by HVAC/appliances at time step $k$ and $q_{s}(k)$ is the power supplied by solar photovoltaics at time step $k$. For now, it is assumed that both are known apriori.\\
\\
To define the cost of electricity we divide the day $t \in [0,T]$ into on-peak and off-peak periods. We define an off peak period starting from 12am till $t_{\text{on}}$ and $t_{\text{off}}$ till 12am. We define an on-peak period between $t_{\text{on}}$ till $t_{\text{off}}$. The Time-of-Use (TOU, \$ per kWh) electricity cost during on-peak and off-peak is denoted by $p_{\text{on}}$ and $p_{\text{off}}$ respectively. We further simplify this as $p_k = p_{on}$ if $ k \in T_{on}$ and $p_k = p_{off}$ if $ k \in T_{off}$ where $T_{on}$ and $T_{off}$ are the on-peak and off-peak hours, respectively. These TOU charges define the first part of the objective function as:\\
\begin{align*}
J_{\text{E}}(\mbf u, \mbf e)&=p_{\text{off}} \sum_{k=0}^{t_{\text{on}}-1}{q(k)\Delta t} + p_{\text{on}} \sum_{k=t_{\text{on}}}^{t_{\text{off}}-1}{q(k)\Delta t} + p_{\text{off}} \sum_{k=t_{\text{off}}}^{T}{q(k)\Delta t}\\
&=\sum_{k \in [0,T]} p_k (q_{a}(k)-q_{s}(k)+u({k}))\Delta t\\
&=\sum_{k \in [0,T]} p_k (q_{a}(k)-q_{s}(k))\Delta t+\sum_{k \in [0,T]} p_k u({k})\Delta t
\end{align*}
Where the daily terminal timestep is $T=24/\Delta t$. Clearly, only the second term in this objective function is significant for the purposes of optimization.\\
\\
We also include a demand charge, which is a cost proportional to the maximum rate of power taken from the grid during on-peak times. This cost is determined by $p_{d}$ which is the price in \$ per kW. Thus it follows the demand charge will be:\\
\begin{align*}
J_{D}(\mbf u, \mbf e)&=p_{d} \sup_{k \in \{t_{\text{on}},....,t_{\text{off}}-1\}}{q(k)}\\
&p_{d} \sup_{k \in \{t_{\text{on}},....,t_{\text{off}}-1\}}\{q_{a}(k)-q_{s}(k)+u({k})\}
\end{align*}

\subsection{24 hr Optimal Residential Battery Storage Problem}
We may now define the problem of optimal battery scheduling in the presence of demand and Time-of-Use charges, denoted $D(0,e_0)$.
\begin{align*}
& \min_{\mbf u, \mbf e}\{J_{E}(\mbf u, \mbf e)+J_{D}(\mbf u, \mbf e)\} \text{     subject to}\\
& e(k+1)=\alpha (e(k)+\eta u({k})\Delta t)\text { for } k=0,...,T\\
& e(k)\in X \text { for } k=0,...,T\\
& u(k) \in U \text { for } k=0,...,T\\
& e_{0}=e0
\end{align*}
Where recall $U:=[\underline{u},\bar{u}]$ and $X:=[\underline{e},\bar{e}]$.
\begin{prop}
  Problem $D(0,e_0)$ is a special case of $S(t_0,x_0)$
\end{prop}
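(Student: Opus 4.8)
The plan is to prove this by pure pattern-matching: I would exhibit an explicit assignment of the data $(f, X, U, x_0, t_0, c_k, c_T, d_k)$ appearing in the template $S(t_0,x_0)$ of~\eqref{eqn:S} such that the resulting instance is exactly $D(0,e_0)$. Since each problem is determined by its objective together with its constraints, it suffices to match the constraint structure and then the objective term by term.

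First I would match the constraints. I take $t_0 = 0$ and $x_0 = e_0$, the scalar state $x(k) = e(k)$ (so $n=1$) and scalar input $u(k)$ (so $m=1$), the dynamics $f[x,u,k] = \alpha(x + \eta u \Delta t)$, and the admissible sets $X = [\underline{e},\bar{e}]$ and $U = [\underline{u},\bar{u}]$. With these choices the state recursion, the box constraints on $e$ and $u$, and the fixed initial condition of $D(0,e_0)$ coincide line for line with those of $S(0,e_0)$.

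Next I would match the objective. The Time-of-Use cost $J_E$ is a sum over the horizon of terms $p_k(q_a(k)-q_s(k)+u(k))\Delta t$; since $q_a$, $q_s$ and $p_k$ are prescribed functions of the time index, each summand is a function of $(u(k),k)$ alone, so setting $c_k(x,u) = p_k\Delta t\,(q_a(k)-q_s(k)+u)$ identifies $J_E$ with the separable running cost $\sum_{k=t_0}^{T-1} c_k(x(k),u(k))$ of~\eqref{eqn:S}, with no genuine terminal penalty (so $c_T \equiv 0$); note $c_k$ is a legitimate choice because it is permitted to depend on $k$, which absorbs the state-independent constant $\sum_k p_k(q_a(k)-q_s(k))\Delta t$. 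The demand charge $J_D = p_d\sup_k\{q_a(k)-q_s(k)+u(k)\}$ is then matched to the supremum term $\sup_k d_k$ by taking $d_k(x,u) = p_d(q_a(k)-q_s(k)+u)$.

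The main obstacle is reconciling two mismatches in this supremum term. First, the argument $q_a(k)-q_s(k)+u(k)$ depends on the input, whereas~\eqref{eqn:S} literally writes $d_t(x(k))$, a function of the state only; I would resolve this by observing that the reduction underlying $S$ already treats the supremum argument as a function $d_i(x,u)$ of both state and input, as is explicit in the construction in the proof of Corollary~\ref{cor:main} (where $h_i(x,u,\phi^2_{i-1}) = \max(d_i(x,u),\phi^2_{i-1})$), so the input-dependent demand charge is admissible. Second, the demand-charge supremum runs only over the on-peak window $\{t_{\text{on}},\dots,t_{\text{off}}-1\}$ rather than over all $t_0 \le k \le T$; I would extend $d_k$ to the full horizon by keeping it equal to $p_d(q_a(k)-q_s(k)+u)$ on the on-peak window and setting it to a constant $M$ off-window, where $M$ is chosen below $\min_{k\in\text{on-window},\,u\in U} p_d(q_a(k)-q_s(k)+u)$ (a finite value, since $U$ is compact and the horizon is finite). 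Then the off-window indices never attain the supremum, so the full-horizon sup equals the on-peak sup. After verifying these two points, the data above realize $D(0,e_0)$ as an instance of $S(0,e_0)$, completing the proof.
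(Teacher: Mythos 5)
Your proposal is correct and takes essentially the same approach as the paper: the paper's proof is exactly this instantiation, setting $c_i = p_i(q_a(i)-q_s(i)+u(i))\Delta t$ and letting $d_i$ equal the demand-charge term on the on-peak window and a constant elsewhere. Your treatment is in fact slightly more careful than the paper's on the two wrinkles you flag — the paper silently lets $d_i$ depend on $u$ (justified, as you note, by the construction in Corollary~\ref{cor:main}), and it pads the off-peak indices with $0$ rather than your constant $M$ below the on-peak minimum, which matches $J_D$ exactly only when the on-peak supremum is nonnegative.
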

\begin{proof}
  Let $c_i= p_i (q_{a}(i)-q_{s}(i)+u({i}))\Delta t$
\[
d_i=\begin{cases}
p_d(q_{a}(k)-q_{s}(k)+u_{k})& k \in T_{on}\\
0 & \text{otherwise.}
\end{cases}
\]
\end{proof}
We conclude that our algorithmic approach to forward separable dynamic programming can be applied to this problem as per Corollary~\ref{cor:main}. That is, it can be represented as an augmented dynamic programming problem of Form $A(t_0,x_0)$.

\section{Numerical Implementation}
To illustrate our approach to generalized dynamic programming, we use solar and usage data obtained by local utility Salt River Project in Tempe, AZ. We also use pricing data from SRP and battery data obtained for the Tesla Powerwall. As is standard practice, for implementation, we used a discrete input and state space. The results of the simulation are shown in Fig.~\ref{fig:deterministic}. These results show a slight improvement in accuracy over results obtained based on the approach to a similar problem in~\cite{Thermostat} (approximately \$0.98 savings).
\small{
\begin{table}
	\centering
	\caption{List of constant values (prices correspond to Salt River Project E21 price plan)}
	\label{tab:constants}
	\begin{tabular}{|l|l||l|l|l}
		\cline{1-4}
		Constant  & Value       &  Constant  & Value &  \\ \cline{1-4}
		$\alpha$  & 0.999791667 (W/h) &  $t_{\text{off}}$  & 41  &  \\ \cline{1-4}
		$\eta$    & 0.92  (\%)      &  $p_{\text{on}}$    & $0.0633 \times 10^{-3}$ (\$/KWh) &  \\ \cline{1-4}
		$\bar{u}$ & 4000 (Wh)       &  $p_{\text{off}}$ & $0.0423 \times 10^{-3}$ (\$/KWh) &  \\ \cline{1-4}
		$\underline{u}$  & -4000 (Wh) &  $p_{\text{d}}$  & 3.364 (\$/KWh) &  \\ \cline{1-4}
		$\bar{e}$    & 8000   (Wh)     &  $\Delta t$    & 0.5 (h) &  \\ \cline{1-4}
		$t_{\text{on}}$ & 27        &  &  &  \\ \cline{1-4}
	\end{tabular}
\end{table}
}
\normalsize

\begin{figure}
	\includegraphics[scale=0.2]{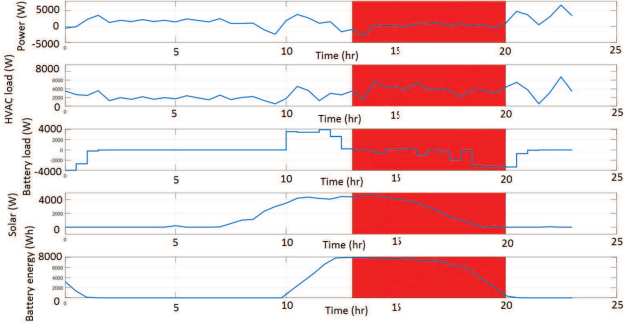}
	\label{fig:stochastic}
	\caption{The trajectory the algorithm produces for randomly generated stochastic solar data. The supremun of the power is 1.66(kw) and the cost is \$64.9889.}
\end{figure}

\begin{figure}
	\includegraphics[scale=0.2]{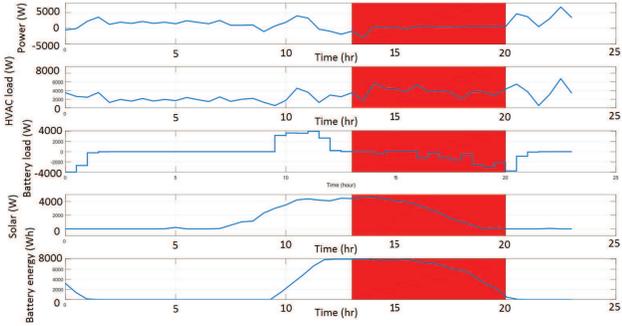}
	\label{fig:deterministic}
	\caption{The trajectory the algorithm produces for deterministic solar data. The supremun of the power is 0.7033(kw) and the cost is \$46.389.}
\end{figure}

\section{Using a Stochastic Model} \label{sec:SDP}
To show that this approach can also be extended to stochastic dynamic programming and to evaluate the effect of stochastic uncertainty on battery scheduling, we identified a Gauss-Markov model of solar generation based on SRP data. We then used a trivial extension of problem $A(t_0,s_0)$ to forward separable dynamic programming with stochastic disturbances.

\subsection{Solar Generation Model}
Our approach to modeling the dynamics of solar generation for a given subset of data is to use solar irradiance directly as a primary variable along with other possible correlated variables such as temperature or 2-hr Deltas in pressure. Specifically, we take time-series data of these quantities, denoted $\bold{W}(t)$ and normalize this data as\vspace{-2mm}
\begin{align*}
w_{i}(t)=\dfrac{W_{i}(t)-\mu_{i}(t)}{\sigma_{i}(t)}\vspace{-2mm}
\end{align*}
Where $\mu_{i}(t)$ is the average historic and clear-sky mean of the variable $W_{i}$ at time step $t$ and $\sigma_{i}(t)$ is the standard deviation of variable $W_{i}$ at time step $t$.\\
The generating process is then given by:\vspace{-2mm}
\begin{align*}
& \bold{w}(t)=A\bold{w}(t-1)+B\bold{\epsilon}(t-1) \text{  for  } t=1,..,T\\
& \text{where } \bold{w}(t) \in \mathbb{R}^3, \bold{w}(0)=\bold{0}\\
& \epsilon(t) \sim \mathbb{N}(\bold{0},\Sigma  ) \text{ , } \Sigma_{i,j}=\delta_{i,j}:=\begin{cases}1 & i=j\\0& i \neq j\end{cases}\vspace{-2mm}
\end{align*}
Where the matrices $A$ and $B$ are chosen to preserve the lag 0 and lag 1 cross-correlations seen in the collected data. Specifically, we can compute these matrices as (\cite{Solar})\vspace{-2mm}
\[
 A=M_{1}M_{0}^{-1}\qquad \qquad \qquad  BB^{T}=M_{0}-M_{1}M_{0}^{-1}M_{1}^{T}
\]
Where $M_{i}$ is the i-lag cross correlation matrix. So $(M_{i})_{m,n}=\rho_{i}(m,n)$ where $\rho_{i}(m,n)$ is the cross-correlation coefficient between variables m and n with variable n lagged by i time steps. Then, adding back in the mean and deviation, we obtain the power supplied by solar at time step $k$ as\vspace{-2mm}
\[
q_{s}(k)=w_{1}(k) \sigma_{1}(k) + \mu_{1}(k)\vspace{-2mm}
\]
\subsection{Augmented Stochastic Dynamic Programming}
We now define a class of Stochastic Dynamic Programming problems, $T(t_0,x_0)$ of the form
 \begin{align}
  &\min_{\mathbf u, \mathbf x}  J_{t_0,x_0}(\mathbf u, \mathbf x) = \mbb E\left( \sum_{t=t_0}^{T-1}{c_{t}(x(t),u(t))} + c_{T}({x(T)})  \right) \label{eqn:DP}\\ \nonumber
  &\text{subject to:  }  x(t+1)=f[x(t),u(t),t;v(t)] \text{,  given  } x(t_0)=x_0  \\ \nonumber
      & x(t) \in X \text{ for  } t=t_{0}+1,..,T \\ \nonumber
      & u(t) \in U \text{ for  } t=t_0,..,T-1  \nonumber\\
      & v(t) \sim \mbb N(\mbf 0,\Sigma),\qquad  \Sigma_{i,j}=\delta_{i,j}
 \end{align}
As shown in~\cite{Bellman}, A stochastic version of the Bellman Equation can be used to solve Stochastic Dynamic Programming problems of the Form $T(t_0,x_0)$. Specifically, suppose that $F$ satisfies $F(x,T)=c_T(x)$ and
\begin{align} \label{Bellman_stochastic}
F(x,t)=  \inf_{u}\{& c_t(x,u )\\ \nonumber
&\qquad + \mathbb{E}_v[F(f[x,u,t;v],t+1) \;|\; x,u\; ]\}. \nonumber
\end{align}
Then for problem $T(t_0,x_0)$, $F(x,t)= J_{t,x}^*$ and $\pi^*(x)=\inf_{u}\{ c_t(x,u )+ \mathbb{E}_v[F(f[x,u,t;v],t+1)  ]\}$ defines an optimal policy.

\noindent \textbf{Stochastic Battery Scheduling} We now modify Problem $D(0,e_0)$ to give a stochastic version of the battery scheduling problem
\begin{align*}
& \min_{\mbf u, \mbf e} \mbb E \left(\{J_{E}(\mbf u, \mbf e)+J_{D}(\mbf u, \mbf e)\}\right) \text{     subject to}\\
& e(k+1)=\alpha (e(k)+\eta u({k})\Delta t)\text { for } k=0,...,T\\
& w(k+1)=Aw(k)+B\epsilon(k)\text { for } k=0,...,T\\
& e(k)\in X \text { for } k=0,...,T\\
& u(k) \in U \text { for } k=0,...,T\\
& e_{0}=e0,\qquad \epsilon(k) \sim \mathbb{N}(\bold{0},\Sigma  )
\end{align*}

To solve the stochastic version of $D(0,e_0)$, we augment to obtain a stochastic version of Problem $A(t_0,x_0)$, which is a special case of $T(t_0,x_0)$, which then admits a solution using the stochastic version of Bellman's equation.

\subsection{Implementation of the Stochastic Algorithm}
The primary challenge with implementation is computing the expectation in Bellman's equation \eqref{Bellman_stochastic}. Specifically, if $\phi$ is the pdf of $\bold{v}(t)$, we must compute

\small{
\begin{align*}
\mathbb{E}_v[F(f[x,u,t;v],t+1) \;|\; x,u\; ]\}= \int{F(f(x,u,t;v),t+1) \phi(v) dv }
\end{align*}
}
\normalsize
To numerically integrate this function, we discretize $u$, $x$ and $v$ so that the integral becomes a sum where $\phi(v_i)$ is a weighted sample from the normal distribution. The results of this algorithm are shown in Figure~\ref{fig:stochastic} using the parameter values from Table~\ref{tab:constants}. The solar data generated from this run were then used as input to the deterministic algorithm in order to compare performance. As expected, the deterministic case performs better than the stochastic case.

\section{Conclusion}
In this paper we have proposed a generalized formulation of the dynamic programming problem and shown that if the objective function is forward separable, these problems may be solved using an equivalent augmented dynamic programming approach. Furthermore, we have shown that the problem of optimal scheduling of battery storage in the presence of combined demand and time-of-use charges is a special case of this class of forward separable dynamic programming problems. We have further extended these results to stochastic dynamic programming with a forward separable objective. The proposed algorithms were demonstrated on a battery scheduling problem using first a deterministic and then Gauss-Markov model for solar generation and load.
%

\bibliographystyle{IEEEtran}
\bibliography{bibliography,bibliography2,bibliography3}
\end{document}